\documentclass[a4paper]{article}
\usepackage[all]{xy}
\usepackage{amsmath, amssymb, amsthm}
\usepackage{latexsym, color}
\usepackage[dvipdfmx, hiresbb]{graphicx}
\usepackage{float}

\newtheorem{thm}{Theorem}[section]

\newtheorem{lem}[thm]{Lemma}

\newtheorem{cor}[thm]{Corollary}
\newtheorem{rem}[thm]{Remark}

\newtheorem{conj}[thm]{Conjecture}

\title{A resolution of Kellner's conjectures on Wilson quotients}
\author{Yutaro Matsuno}

\newcommand{\n}{\normalfont}
\newcommand{\ds}{\displaystyle}

\begin{document}

\maketitle

\begin{abstract} \n
Kellner expressed higher congruences for the Wilson quotient $W_p$ of an odd prime $p$ in terms of power sums of Fermat quotients, and recursively constructed certain $p$-independent polynomials $\psi_\nu$ occurring in these congruences. In this note, we give a simple $p$-adic proof of these congruences using the $p$-adic logarithm and $p$-adic exponential function. We also provide an explicit generating function for the polynomials $\psi_\nu$ in terms of complete Bell polynomials. This gives a direct derivation of Kellner's polynomials and allows us to resolve two conjectures stated by Kellner.
\end{abstract}

\section{Introduction}

Let $p$ be an odd prime. Wilson's theorem states that
\[
(p-1)! \equiv -1 \pmod p.
\]
This motivates the definition of the \textit{Wilson quotient}
\[
W_p=\frac{(p-1)!+1}{p}.
\]

On the other hand, Fermat's little theorem states that
\[
a^{p-1}\equiv 1 \pmod p
\]
for every integer $a$ prime to $p$. We define the \textit{Fermat quotient} by
\[
Q_p(a)=\frac{a^{p-1}-1}{p},
\]
and, for $n\ge1$, we write
\[
Q(p,n)=\sum_{a=1}^{p-1}Q_p(a)^n
\]
for the corresponding power sums.

Lerch \cite{Lerch} proved the congruence
\[
W_p\equiv Q(p,1)\pmod p.
\]
Kellner \cite{Kellner} recently generalized this interesting congruence to higher powers of $p$.
\\

\begin{thm}[Kellner] \n
There exist unique multivariate polynomials
\[
\psi_\nu(X_1,\dots,X_\nu)\in \mathbb{Z}[X_1,\dots,X_\nu]
~~(\nu\ge1),
\]
with no constant term, such that for every integer $n\ge1$ and every odd prime $p>n$,
\[
W_p\equiv\sum_{\nu=1}^n\frac{1}{\nu!}\psi_\nu(Q(p,1),\dots,Q(p,\nu))p^{\nu-1}\pmod {p^n}.
\]
\\
\end{thm}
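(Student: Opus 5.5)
We would take $p$-adic logarithms of the identity $\prod_{a=1}^{p-1}a^{p-1}=((p-1)!)^{p-1}$. Write $(p-1)!=-(1-pW_p)$. Since $p-1$ is even, $((p-1)!)^{p-1}=(1-pW_p)^{p-1}$, and $a^{p-1}=1+pQ_p(a)$. Both sides are $1$-units in $\mathbb{Z}_p$, so $\log$ converges on them. Using $\log(xy)=\log x+\log y$ on $1+p\mathbb{Z}_p$ we get
\[
(p-1)\log(1-pW_p)=\sum_{a=1}^{p-1}\log(1+pQ_p(a))=\sum_{k\ge1}\frac{(-1)^{k-1}p^k}{k}Q(p,k).
\]
Dividing by $p-1$, i.e.\ multiplying by $-\sum_{j\ge0}p^j$, gives
\[
\log(1-pW_p)=L:=-\sum_{j\ge0}p^j\sum_{k\ge1}\frac{(-1)^{k-1}p^k}{k}Q(p,k).
\]
Since $v_p(L)\ge1$ and $p$ is odd, $\exp$ converges at $L$ and inverts $\log$. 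Hence $W_p=(1-\exp L)/p$.

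Next we expand in powers of $p$. Collecting terms with $k\le n$, we write $L\equiv\sum_{m=1}^{n}c_m p^m$ modulo higher order, where
\[
c_m=-\sum_{k=1}^{m}\frac{(-1)^{k-1}}{k}Q(p,k)\in\mathbb{Q}[Q(p,1),\dots,Q(p,m)].
\]
Then $\exp(\sum_m c_m t^m)=\sum_\nu B_\nu(1!c_1,2!c_2,\dots,\nu!c_\nu)\,t^\nu/\nu!$ with $B_\nu$ the complete Bell polynomial. Reading off the coefficient of $p^\nu$ in $1-\exp L$ and dividing by $p$ gives the candidate
\[
\psi_\nu=-B_\nu(1!c_1,\dots,\nu!c_\nu),
\]
viewed as a polynomial in $X_k=Q(p,k)$. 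It has no constant term. For integrality in $\mathbb{Z}[X]$ we would write $-B_\nu(1!c_1,\dots,\nu!c_\nu)$ as $\nu!$ times the $t^\nu$-coefficient of $1-\exp(\sum_m c_m t^m)$. This is $\nu!\,[t^\nu]\bigl(1-\prod_k(1-t)^{(-1)^{k}X_k/(k(1-t))\cdots}\bigr)$, i.e.\ of the form $\nu!\,[t^\nu]$ of a product of binomial-type series, and we would check that the result lies in $\mathbb{Z}[X]$. This is the paper's generating-function route. Alternatively, integrality can be checked combinatorially, since the entries $\nu!c_\nu$ have only the denominators $k\mid \nu!/(\nu-k)!$.

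Then we control the error. The discarded terms have $k\ge n+1$ in the log series and $m\ge n+1$ in $\exp$. Terms $p^k/k$ with $n<k<p$ have valuation $k\ge n+1$. For $k\ge p$, the valuation is $k-v_p(k)\ge p-1$. The terms $L^m/m!$ satisfy $v_p\ge m-(m-1)/(p-1)$, which is fine. All denominators $\le n!$ that remain are units since $p>n$. After the final division by $p$, everything is correct modulo $p^n$ provided the discarded part has valuation $\ge n+1$.

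The main obstacle is the boundary case $p=n+1$ (possible for even $n$). There the term $k=p$, namely $p^{p-1}Q(p,p)$ with valuation exactly $p-1=n$, survives modulo $p^{n+1}$. We would handle it with $Q_p(a)^p\equiv Q_p(a)\pmod p$, so this term is $\equiv p^{n}Q(p,1)$. We would then verify that it is absorbed in $\psi_n$ modulo $p$. Concretely, compare $\tfrac1{n!}\psi_n$ at $p=n+1$ using Wilson's theorem $n!\equiv-1$ and the $1/k$ coefficients. If this cancellation fails, the argument still proves the statement for $p>n+1$, and the boundary case would need separate treatment.

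Uniqueness would follow if, for each $\nu$, the values $(Q(p,1),\dots,Q(p,\nu))$ for $p>\nu$ are rich enough that a polynomial vanishing modulo $p$ at them all must be $0$. We would instead obtain uniqueness from the canonical form of the construction, i.e.\ as the normalization fixed by the generating function.
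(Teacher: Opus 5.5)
\textbf{Verdict: the proposal has a genuine gap at the boundary case $p=n+1$, and the uniqueness part is not proved.}

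Your construction of the candidates $\psi_\nu=-\mathbb{B}_\nu(1!c_1,\dots,\nu!c_\nu)$ agrees with the paper's. Your $L$ is its $G(\{Q(p,i)\}_i)(p)$, obtained from $\prod_a a^{p-1}=((p-1)!)^{p-1}$ rather than the Teichm\"uller decomposition. Your truncation argument is fine for $p\ge n+2$.

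\textbf{The boundary case $p=n+1$.} The statement requires this case whenever $n+1$ is prime (e.g.\ $n=2$, $p=3$), and you leave it open. Your error analysis is also wrong there. You dismiss the exponential tail because $v_p(L^m/m!)\ge m-(m-1)/(p-1)$ ``is fine''. But for $m=p=n+1$ this bound is only $p-1=n$, so $L^p/p!$ survives modulo $p^{n+1}$, just like the $k=p$ logarithm term.

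Hence the mechanism you propose cannot work. The term $p^{p-1}Q(p,p)\equiv p^nQ(p,1)$ does not enter the $t^n$-coefficient that defines $\psi_n$, so nothing in $\psi_n$ absorbs it. On its own it is $\equiv p^nW_p\not\equiv 0\pmod{p^{n+1}}$ by Lerch, unless $p$ is a Wilson prime; for $p=3$ it equals $9$.

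What actually happens is that the two surviving pieces cancel each other:
\begin{itemize}
\item Since $L\equiv -Q(p,1)p\pmod{p^2}$ and $(p-1)!\equiv -1\pmod p$, one has $L^p/p!\equiv Q(p,1)^p p^{p-1}\pmod{p^p}$.
\item So the total extra contribution to $1-\exp L$ is $p^{p-1}\bigl(Q(p,p)-Q(p,1)^p\bigr)$.
\item This vanishes modulo $p^p$ because $Q(p,p)\equiv Q(p,1)\equiv Q(p,1)^p\pmod p$.
\end{itemize}

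The paper organizes this more cleanly. It first proves the exact identity $W_p=\sum_{\nu\ge1}\frac{1}{\nu!}\tilde\psi_\nu(Q(p,1),\dots,Q(p,\nu))p^{\nu-1}$. The tail is then controlled by integrality and $v_p(p^{\nu-1}/\nu!)$ alone. The single bad term $\nu=p$ is handled by Lemma 1.6, $\tilde\psi_p\equiv X_1^p-X_p\pmod p$. Its $X_p$ and $X_1^p$ terms are exactly your logarithm term and the $L^p/p!$ term you overlooked.

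\textbf{Integrality.} Your first integrality sentence is garbled. The correct reason is your alternative one, completed: each argument $k!c_k=\sum_{i\le k}\frac{(-1)^ik!}{i}X_i$ lies in $\mathbb{Z}[X_1,\dots,X_k]$ because $i\mid k!$, and $\mathbb{B}_\nu$ has integer coefficients.

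\textbf{Uniqueness.} Your uniqueness paragraph is not an argument. A canonical construction does not exclude other integer polynomials satisfying the same congruences. One would need exactly the ``richness'' statement you mention: that no nonzero $D\in\mathbb{Z}[X_1,\dots,X_\nu]$ satisfies $D(Q(p,1),\dots,Q(p,\nu))\equiv 0\pmod p$ for all $p>\nu$. You do not prove this. The paper does not prove uniqueness either; it takes it from Kellner and uses it only to identify $\tilde\psi_\nu$ with $\psi_\nu$. You should cite it rather than claim to derive it.
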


Kellner stated two conjectures concerning the polynomials $\psi_\nu$.
\\

\begin{conj}[Kellner] \n
Let $\mathcal{P}(n)$ denote the partition function, and put
\[
\mathcal{P}_{\Sigma}(\nu)=\sum_{i=1}^{\nu}\mathcal{P}(i).
\]
Let $\mathbb{B}_n$ denote the complete Bell polynomial, and let
\[
\overline{H}_n=\sum_{i=1}^n\frac{(-1)^{i+1}}{i}
\]
be the alternating harmonic number. Then the following assertions hold:\\
(i) The polynomial $\psi_\nu$ has exactly $\mathcal P_{\Sigma}(\nu)$ terms.\\
(ii)\[
\psi_\nu(\{\pm1\})=-\mathbb{B}_\nu(\{\mp i!\overline{H}_i\}_{i=1,\dots,\nu}),
\]
and
\[
\sum_{\nu=1}^{\infty}\frac{1}{\nu!}\psi_\nu(\{\pm1\})T^\nu=1-(T+1)^{\mp\frac{1}{1-T}}.
\]
Here the signs are chosen correspondingly.
\\
\end{conj}

Kellner's construction of the polynomials $\psi_\nu$ is recursive. The aim of this note is to give a direct $p$-adic derivation of these congruences using the $p$-adic logarithm and exponential function, and to determine an explicit generating function for the polynomials $\psi_\nu$.
\\

\begin{thm} \n
Let
\[
H(\mathbf{X})(T):=-\exp\left(\sum_{i=1}^{\infty}\frac{(-1)^i}{i}X_i\frac{T^i}{1-T}\right)=-1+\sum_{\nu=1}^{\infty}\frac{1}{\nu!}\tilde{\psi}_\nu(\mathbf{X})T^\nu.
\]
Then
\begin{align*}
\tilde{\psi}_{\nu}(\mathbf{X})&=-\mathbb{B}_\nu\left(\left\{\sum_{i=1}^{k}\frac{(-1)^ik!}{i}X_i\right\}_{k=1,\dots,\nu}\right) \\
&=\sum_{\substack{m_1,\dots,m_{\nu}\ge 0 \\ 1\le m_1+2m_2+\dots+\nu m_{\nu}\le \nu}}(-1)^{m_1+2m_2+\dots+\nu m_{\nu}+1} \\
&~~~~~~~~~~~~\nu!\left(\begin{array}{c} \nu-(m_1+2m_2+\dots+\nu m_{\nu})+(m_1+\dots+m_{\nu})-1 \\ (m_1+\dots+m_{\nu})-1\end{array}\right) \\
&~~~~~~~~~~~~~~~~~~~~~~~~~~~~~~~~~~~~~~~~~~~~~~~~~~~~~~~~~~~~\left(\prod_{r=1}^{\nu} \frac{1}{r^{m_r}m_r!}\right)\left(\prod_{r=1}^{\nu}X_r^{m_r}\right),
\end{align*}
and
\[
\tilde{\psi}_\nu(\mathbf{X})\in \mathbb{Z}[X_1,\dots,X_{\nu}]
\]
for every $\nu\ge1$. Moreover, for every odd prime $p$, one has the $p$-adic identity
\[
W_p=\sum_{\nu=1}^{\infty}\frac{1}{\nu!}\tilde{\psi}_\nu(Q(p,1),\dots,Q(p,\nu))p^{\nu-1}.
\]
\\
\end{thm}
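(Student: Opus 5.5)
The plan is to derive the $p$-adic identity from the product $\prod_{a=1}^{p-1}a^{p-1}$ via the $p$-adic logarithm. The Bell-polynomial formula, the explicit sum and integrality are then formal statements about the power series $H(\mathbf X)(T)$.

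\emph{Formal part.} Write the exponent as
\[
f(T)=\sum_{i\ge1}\frac{(-1)^i}{i}X_i\frac{T^i}{1-T}=\sum_{k\ge1}c_kT^k,\qquad c_k=\sum_{i=1}^k\frac{(-1)^i}{i}X_i ,
\]
using $T^i/(1-T)=\sum_{k\ge i}T^k$. Put $a_k=k!\,c_k=\sum_{i\le k}(-1)^i\frac{k!}{i}X_i$. Since $f(T)=\sum_k a_kT^k/k!$, the defining property of complete Bell polynomials gives
\[
\exp f(T)=\sum_{\nu\ge0}\mathbb B_\nu(a_1,\dots,a_\nu)\frac{T^\nu}{\nu!}.
\]
Hence $\tilde\psi_\nu=-\mathbb B_\nu(\{a_k\})$ for $\nu\ge1$, which is the first displayed formula.

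For the explicit sum, I instead expand
\[
\exp f=\prod_r\exp\left(\frac{(-1)^rX_rT^r}{r(1-T)}\right)
\]
as a sum over $(m_r)$ of $\prod_r \bigl((-1)^rX_r/r\bigr)^{m_r}/m_r!$ times $T^{\Sigma}(1-T)^{-M}$, where $\Sigma=\sum rm_r$ and $M=\sum m_r$. The coefficient of $T^{\nu-\Sigma}$ in $(1-T)^{-M}$ is $\binom{\nu-\Sigma+M-1}{M-1}$. Multiplying by $-\nu!$ and collecting the sign $(-1)^{\Sigma}$ gives the stated formula; the term $m=0$ only contributes to the constant $-1$.

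For integrality, I use the Bell recurrence
\[
\mathbb B_{n+1}=\sum_{k=0}^n\binom nk a_{k+1}\mathbb B_{n-k},
\]
which comes from differentiating $H=-\exp f$, i.e.\ $H'=f'H$. Since $i\mid k!$ for $i\le k$, each $a_k$ lies in $\mathbb Z[X_1,\dots,X_k]$. Induction on $\nu$ then gives $\tilde\psi_\nu\in\mathbb Z[X_1,\dots,X_\nu]$.

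\emph{$p$-adic part.} By definition $a^{p-1}=1+pQ_p(a)$, so
\[
\prod_{a=1}^{p-1}\bigl(1+pQ_p(a)\bigr)=((p-1)!)^{p-1}=(pW_p-1)^{p-1}=(1-pW_p)^{p-1},
\]
using that $p-1$ is even. All factors are $\equiv1\pmod p$, so I take $\log_p$ of both sides:
\[
(p-1)\log(1-pW_p)=\sum_a\sum_{i\ge1}\frac{(-1)^{i+1}}{i}p^iQ_p(a)^i=-\sum_{i\ge1}\frac{(-1)^i}{i}Q(p,i)p^i .
\]
Dividing by $p-1=-(1-p)$ gives
\[
\log(1-pW_p)=\sum_i\frac{(-1)^i}{i}Q(p,i)\frac{p^i}{1-p}=f(p)\big|_{X_i=Q(p,i)}.
\]
This value has valuation $\ge1$, so $\exp_p$ converges there (because $p$ is odd). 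Since $\exp\circ\log=\mathrm{id}$ on $1+p\mathbb Z_p$, we get $1-pW_p=\exp(f(p))$. Hence $pW_p=1+H(\mathbf Q)(p)=\sum_{\nu\ge1}\tilde\psi_\nu\,p^\nu/\nu!$, and dividing by $p$ yields the claimed identity.

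\emph{Main obstacle.} The delicate step is justifying that $\exp(f(p))$ equals the series obtained by substituting $T=p$ into the formal expansion $\exp f(T)$. This is a rearrangement of a double series.

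Convergence of the target series is clear. The $\tilde\psi_\nu(Q(p,\cdot))$ lie in $\mathbb Z_p$, and $v_p(p^\nu/\nu!)\ge\nu-\frac{\nu-1}{p-1}\to\infty$.

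For the interchange, I would bound the monomial contributions of the explicit formula directly. Each term $\prod_r\bigl((-1)^rX_rp^r/r\bigr)^{m_r}/m_r!\cdot(1-p)^{-M}$ has valuation at least
\[
\sum_r m_r\bigl(r-v_p(r)\bigr)-v_p\Bigl(\prod_r m_r!\Bigr),
\]
which tends to $\infty$ uniformly as $\Sigma\to\infty$, since $r-v_p(r)\ge1$ and $v_p(m!)<m/(p-1)$. Absolute, i.e.\ unconditional, convergence in the non-archimedean sense then permits regrouping by powers of $p$.

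Alternatively, I would argue modulo $p^n$ for each $n$. Truncate $f$ at degree $n$ and the exponential at a finite order with error $O(p^n)$, so that only finitely many terms need to be rearranged.
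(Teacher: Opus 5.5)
Your proposal is correct and follows essentially the same route as the paper: take $\log_p$ of $\prod_a a^{p-1}=\prod_a(1+pQ_p(a))$, divide by $p-1$ to identify $\log_p(-(p-1)!)$ with $f(p)$ at $X_i=Q(p,i)$, exponentiate, and read off the coefficients from the generating function of the complete Bell polynomials. The differences are only refinements. You avoid the Teichm\"uller character by using $((p-1)!)^{p-1}=(1-pW_p)^{p-1}$ with $p-1$ even. You get the explicit sum more directly by factoring $\exp f=\prod_r\exp(\cdot)$ and extracting coefficients of $(1-T)^{-M}$, instead of counting compositions. You prove integrality via the Bell recurrence. You also justify the substitution $T=p$ into the formal expansion, a rearrangement the paper leaves implicit.
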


As a corollary, Kellner's higher congruences are recovered and generalized.
\\

\begin{cor} \n
For any odd prime $p$ and any natural number $n>0$, let
\[
m(n,p)=\max\left\{\nu=\sum_{i=0}^r a_ip^i\in\mathbb{N}_{>0}~\Bigg|~a_i\in \{0,\dots,p-1\},~\nu-1-\frac{1}{p-1}\left(\nu-\sum_{i=0}^r a_i\right)<n\right\}.
\]
Then,
\[
W_p\equiv \sum_{\nu=1}^{m(n,p)} \frac{1}{\nu!}\tilde{\psi}_{\nu}(Q(p,1),\dots,Q(p,\nu))p^{\nu-1}\pmod{p^n}.
\]
\end{cor}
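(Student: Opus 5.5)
The plan is to obtain the Corollary from the $p$-adic identity in the Theorem by truncating the series and bounding the $p$-adic valuation of every term that is dropped. I take from the Theorem that $\tilde{\psi}_\nu\in\mathbb{Z}[X_1,\dots,X_\nu]$ and that
\[
W_p=\sum_{\nu\ge1}\frac{1}{\nu!}\tilde{\psi}_\nu(Q(p,1),\dots,Q(p,\nu))p^{\nu-1}
\]
holds in $\mathbb{Z}_p$. Since each $Q_p(a)$ is an ordinary integer, every $Q(p,i)$ is an integer. Hence $\tilde{\psi}_\nu(Q(p,1),\dots,Q(p,\nu))\in\mathbb{Z}$, and the $\nu$-th term $t_\nu$ satisfies
\[
v_p(t_\nu)\ \ge\ \nu-1-v_p(\nu!).
\]
By Legendre's formula, with $s_p(\nu)=\sum_i a_i$ the base-$p$ digit sum of $\nu=\sum_i a_ip^i$, we have $v_p(\nu!)=\frac{\nu-s_p(\nu)}{p-1}$. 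So the lower bound is exactly the quantity
\[
f(\nu):=\nu-1-\frac{1}{p-1}\bigl(\nu-s_p(\nu)\bigr)
\]
that appears in the definition of $m(n,p)$.

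Next I check that $m(n,p)$ is well defined. The set in its definition is nonempty, because $f(1)=0<n$. It is finite, because $s_p(\nu)\ge1$ gives
\[
f(\nu)\ge \nu-1-\frac{\nu-1}{p-1}=\frac{(p-2)(\nu-1)}{p-1}\longrightarrow\infty,
\]
using $p\ge3$. So the maximum exists. By maximality, every $\nu>m(n,p)$ lies outside the set, which means $f(\nu)\ge n$ and therefore $v_p(t_\nu)\ge n$.

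Finally I handle the tail. The same estimate $v_p(t_\nu)\ge f(\nu)\to\infty$ shows that the series converges in $\mathbb{Z}_p$, consistent with the Theorem. The tail $\sum_{\nu>m(n,p)}t_\nu$ is a $p$-adic limit of partial sums lying in the closed subgroup $p^n\mathbb{Z}_p$, so the tail itself lies in $p^n\mathbb{Z}_p$. Therefore $W_p-\sum_{\nu\le m(n,p)}t_\nu\in p^n\mathbb{Z}_p$.

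Both $W_p$ and the finite sum are rational numbers. The finite sum is $p$-integral because each of its terms has $f(\nu)\ge0$, which follows from the lower bound $f(\nu)\ge\frac{(p-2)(\nu-1)}{p-1}\ge0$. So the difference is a rational number lying in $p^n\mathbb{Z}_p$, i.e.\ in $p^n\mathbb{Z}_{(p)}$, which is the asserted congruence modulo $p^n$.

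The argument is short once the Theorem is available. The only point needing care is the last step: passing from "every tail term has valuation at least $n$" to "the infinite tail lies in $p^n\mathbb{Z}_p$", which rests on closedness of $p^n\mathbb{Z}_p$, and then reading the result as a congruence between $p$-integral rationals.
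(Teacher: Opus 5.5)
Your proposal is correct and follows the paper's proof. Integrality of $\tilde{\psi}_\nu(Q(p,1),\dots,Q(p,\nu))$ together with Legendre's formula gives each term valuation at least $\nu-1-\frac{1}{p-1}(\nu-s_p(\nu))$, so every term with $\nu>m(n,p)$ vanishes modulo $p^n$ and the Theorem's $p$-adic identity can be truncated; your additional checks (that $m(n,p)$ is well defined, that the tail lies in the closed subgroup $p^n\mathbb{Z}_p$, and that the finite sum is $p$-integral) are all valid and only make explicit what the paper leaves implicit.
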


\begin{proof} \n
Since $\tilde{\psi}_\nu$ has integral coefficients and $Q(p,1),\dots,Q(p,\nu)$ are integers, we have $\tilde{\psi}_\nu(Q(p,1),\dots,Q(p,\nu))\in \mathbb{Z}$. Therefore, by the well-known formula for the $p$-adic valuation of a factorial, if $\ds \nu=\sum_{i=0}^r a_i p^i~~(a_i\in\{0,\dots,p-1\})$, then
\[
v_p\left(\frac{1}{\nu!}\tilde{\psi}_\nu(Q(p,1),\dots,Q(p,\nu))p^{\nu-1}\right)\ge v_p\left(\frac{p^{\nu-1}}{\nu!}\right)=\nu-1-\frac{1}{p-1}\left(\nu-\sum_{i=0}^r a_i\right).
\]
Hence, if $\nu>m(n,p)$, then $\ds \frac{1}{\nu!}\tilde{\psi}_\nu(Q(p,1),\dots,Q(p,\nu))p^{\nu-1}\equiv 0 \pmod {p^n}$. Consequently,
\begin{align*}
W_p&=\sum_{\nu=1}^{\infty}\frac{1}{\nu!}\tilde{\psi}_\nu(Q(p,1),\dots,Q(p,\nu))p^{\nu-1} \\
&\equiv \sum_{\nu=1}^{m(n,p)}\frac{1}{\nu!}\tilde{\psi}_\nu(Q(p,1),\dots,Q(p,\nu))p^{\nu-1}\pmod {p^n}.
\end{align*}
\\
\end{proof}

\begin{rem} \n
If $p-1>n$, then $m(n,p)=n$. However, the above corollary does not fully recover Kellner's theorem, since in the case $n=p-1$ one has $m(n,p)=n+1$. This apparent obstruction can be removed by examining the polynomial $\tilde{\psi}_p$.
\\
\end{rem}

\begin{lem} \n
For any odd prime $p$,
\[
\tilde{\psi}_p(\mathbf{X})\equiv X_1^p-X_p\pmod{p}.
\]
In particular, for any odd prime $p$ and any positive integer $n$ with $p>n$,
\[
W_p\equiv \sum_{\nu=1}^n \frac{1}{\nu!}\tilde{\psi}_{\nu}(Q(p,1),\dots,Q(p,\nu))p^{\nu-1}\pmod{p^n}.
\]
\end{lem}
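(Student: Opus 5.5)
We compute $\tilde{\psi}_p$ modulo $p$ directly from the explicit formula in the Theorem, with $\nu=p$. Each monomial $\prod X_r^{m_r}$ carries the coefficient
\[
(-1)^{w+1}\,p!\binom{p-w+k-1}{k-1}\prod_{r=1}^{p}\frac{1}{r^{m_r}m_r!},
\qquad w=\sum_r r\,m_r,\quad k=\sum_r m_r .
\]
The plan is to show that the factor $p!$ kills every coefficient modulo $p$ except those of $X_1^p$ and $X_p$. The prefactor $p!$ contributes exactly one power of $p$. The denominator $\prod r^{m_r}m_r!$ is divisible by $p$ only if $m_p\ge1$ or some $m_r\ge p$. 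Since $w\le p$, this happens only for $m_p=1$ (all other $m_r=0$) or $m_1=p$ (all other $m_r=0$). For every other tuple the denominator is prime to $p$, so the coefficient is $p$ times a $p$-integral rational. Because $\tilde\psi_p$ has integer coefficients by the Theorem, each such coefficient is an integer divisible by $p$.

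For the two exceptional monomials we compute the coefficients exactly. For $m_p=1$ we have $w=p$ and $k=1$, so the binomial is $\binom{0}{0}=1$ and the coefficient is $(-1)^{p+1}p!/p=(p-1)!\equiv-1\pmod p$ by Wilson's theorem, using that $p$ is odd. For $m_1=p$ we have $w=k=p$, so the binomial is $\binom{p-1}{p-1}=1$ and the coefficient is $(-1)^{p+1}p!/p!=1$. Together these give $\tilde\psi_p\equiv X_1^p-X_p\pmod p$.

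For the congruence, if $p-1>n$ then $m(n,p)=n$ by the Remark, and the Corollary gives the claim directly. The only remaining case is $n=p-1$, where $m(n,p)=p$. Here we must show that the extra term
\[
\frac{1}{p!}\tilde\psi_p(Q(p,1),\dots,Q(p,p))\,p^{p-1}
\]
is $0 \pmod{p^{p-1}}$. Its valuation is $v_p(\tilde\psi_p(\cdots))+(p-1)-1$, so it suffices that $p\mid \tilde\psi_p(Q(p,1),\dots,Q(p,p))$. By the first part this reduces to
\[
Q(p,1)^p\equiv Q(p,p)\pmod p.
\]
This follows from Fermat's little theorem, $Q_p(a)^p\equiv Q_p(a)$, and the Frobenius congruence $(\sum x_a)^p\equiv\sum x_a^p$. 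Together these give $Q(p,1)^p\equiv Q(p,1)\equiv\sum_a Q_p(a)^p=Q(p,p)\pmod p$.

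The main point needing care is the divisibility claim for the generic coefficients. We are relying on the integrality of the full coefficients established in the Theorem, together with the observation that only the denominator can cancel the $p$ in $p!$. The bound $w\le p$ is what restricts that cancellation to the two listed monomials.
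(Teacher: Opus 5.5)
Your proposal is correct and follows essentially the same route as the paper. You isolate $(p,0,\dots,0)$ and $(0,\dots,0,1)$ as the only tuples whose denominators can absorb the $p$ in $p!$, and you use $Q(p,1)^p\equiv Q(p,p)\pmod p$ to kill the extra $\nu=p$ term allowed by $m(p-1,p)=p$. You are slightly more explicit than the paper: you compute the two surviving coefficients ($(p-1)!\equiv-1$ by Wilson's theorem, and $1$) and you handle $n<p-1$ separately via $m(n,p)=n$, both of which the paper leaves implicit.
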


\begin{proof} \n
\begin{align*}
&\tilde{\psi}_p(\mathbf{X})=\sum_{\substack{m_1,\dots,m_p\ge 0 \\ 1\le m_1+2m_2+\dots+p m_p\le p}}(-1)^{m_1+2m_2+\dots+p m_p+1} \\
&~~~~~~~~~~~~p!\left(\begin{array}{c} p-(m_1+2m_2+\dots+p m_p)+(m_1+\dots+m_p)-1 \\ (m_1+\dots+m_p)-1\end{array}\right) \\
&~~~~~~~~~~~~~~~~~~~~~~~~~~~~~~~~~~~~~~~~~~~~~~~~~~~~~~~~~~~~\left(\prod_{r=1}^p \frac{1}{r^{m_r}m_r!}\right)\left(\prod_{r=1}^pX_r^{m_r}\right).
\end{align*}
If $(m_1,\dots,m_p)\neq (p,0,\dots,0),(0,\dots,0,1)$, then
\begin{align*}
&m_1,\dots,m_p<p,~~(m_1+\dots+m_p)-1<p, \\
&p-(m_1+2m_2+\dots+pm_p)<p,~~m_p=0.
\end{align*}
Therefore, the denominator of the coefficient
\[
p!\left(\begin{array}{c} p-(m_1+2m_2+\dots+p m_p)+(m_1+\dots+m_p)-1 \\ (m_1+\dots+m_p)-1\end{array}\right)\left(\prod_{r=1}^p \frac{1}{r^{m_r}m_r!}\right)
\]
is not divisible by $p$. Hence, 
\[
v_p\left\{p!\left(\begin{array}{c} p-(m_1+2m_2+\dots+p m_p)+(m_1+\dots+m_p)-1 \\ (m_1+\dots+m_p)-1\end{array}\right)\left(\prod_{r=1}^p \frac{1}{r^{m_r}m_r!}\right)\right\}\ge 1
\]
and then
\[
\tilde{\psi}_p(\mathbf{X})\equiv X_1^p-X_p\pmod{p}.
\]

Moreover, 
\begin{align*}
\tilde{\psi}_p(Q(p,1),\dots,Q(p,p))&\equiv Q(p,1)^p-Q(p,p) \\
&=\left(\sum_{a=1}^{p-1}Q_p(a)\right)^p-\sum_{a=1}^{p-1}Q_p(a)^p\equiv 0\pmod{p}.
\end{align*}
Then, 
\[
v_p\left(\frac{p^{p-1}}{p!}\tilde{\psi}_p(Q(p,1),\dots,Q(p,p))\right)\ge p-1
\]
and then
\begin{align*}
W_p&\equiv \sum_{\nu=1}^p \frac{1}{\nu!}\tilde{\psi}_{\nu}(Q(p,1),\dots,Q(p,\nu))p^{\nu-1} \\
&\equiv \sum_{\nu=1}^{p-1} \frac{1}{\nu!}\tilde{\psi}_{\nu}(Q(p,1),\dots,Q(p,\nu))p^{\nu-1}\pmod{p^{p-1}}.
\end{align*}
\\
\end{proof}

Consequently, the polynomials $\tilde{\psi}_\nu$ are Kellner's polynomials by the uniqueness assertion in Kellner's theorem.
\\

\begin{cor} \n
$\tilde{\psi}_{\nu}=\psi_{\nu}$ for all $\nu\ge 1$.
\\
\end{cor}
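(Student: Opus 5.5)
The plan is to invoke the uniqueness clause of Kellner's theorem, after checking that the family $\{\tilde{\psi}_\nu\}_{\nu\ge1}$ satisfies exactly the hypotheses that characterize $\{\psi_\nu\}$. Three properties are needed: each $\tilde{\psi}_\nu$ lies in $\mathbb{Z}[X_1,\dots,X_\nu]$, it has no constant term, and the congruence of Kellner's theorem holds for every $n\ge1$ and every odd prime $p>n$.

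Integrality is part of the main theorem. For the constant term, I will read it off the explicit formula in that theorem: the sum runs only over tuples with $m_1+2m_2+\dots+\nu m_\nu\ge1$, so the empty monomial never appears. Alternatively, setting $\mathbf{X}=0$ in the generating function gives $H(0)(T)=-1$, so $\tilde{\psi}_\nu(0)=0$ for all $\nu\ge1$. The congruence for all $p>n$ is exactly the second assertion of the preceding lemma, which already absorbs the borderline case $n=p-1$ mentioned in the remark through $\tilde{\psi}_p\equiv X_1^p-X_p \pmod p$. Uniqueness in Kellner's theorem then gives $\tilde{\psi}_\nu=\psi_\nu$ for every $\nu$.

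The step that needs care is the meaning of ``uniqueness''. Since Kellner's theorem is cited rather than proved here, the uniqueness statement has to be taken as given: a family of integer polynomials without constant term that satisfies the congruences for all $n$ and all $p>n$ is determined. If one wanted to be safe, I would argue inductively. Suppose two such families agree up to index $\nu-1$. Subtracting the congruences at level $n=\nu$ gives $(\psi_\nu-\tilde{\psi}_\nu)(Q(p,1),\dots,Q(p,\nu))\equiv0 \pmod p$ for all primes $p>\nu$. One would then need the tuples $(Q(p,1),\dots,Q(p,\nu)) \bmod p$ to be Zariski-dense enough to force the difference polynomial to vanish, and this is the genuinely nontrivial part.

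I would not reprove that density. It is precisely the content of Kellner's uniqueness assertion, which is stated earlier and may be assumed. So the proof reduces to listing the three verified properties and citing uniqueness.
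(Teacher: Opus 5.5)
Your proposal is correct and follows the paper's own argument. The paper likewise deduces $\tilde{\psi}_\nu=\psi_\nu$ directly from the uniqueness clause of Kellner's theorem, with integrality supplied by Theorem 1.3 and the congruence for all $p>n$ supplied by Lemma 1.6. You are also right to check explicitly that $\tilde{\psi}_\nu$ has no constant term, which the paper leaves implicit, and to note that the uniqueness itself is taken from Kellner rather than reproved.
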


\section{Proof of main results}

\begin{proof}[Proof of Theorem 1.3] \n
By Wilson's theorem, we have $-(p-1)!\in 1+p\mathbb{Z}_p$. Hence
\[
(p-1)!=-\exp_p\bigl(\log_p(-(p-1)!)\bigr).
\]
The $p$-adic logarithm sends products to sums and kills roots of unity. Let $\omega$ be the Teichmüller character, and write
\[
a=\omega(a)\left<a\right>~~(a=1,\dots,p-1).
\]
Then
\[
\log_p(-(p-1)!)=\sum_{a=1}^{p-1}\log_p\left<a\right>.
\]
Since the Teichmüller character has order $p-1$, we obtain
\[
\log_p(-(p-1)!)=\frac{1}{p-1}\sum_{a=1}^{p-1}\log_p(a^{p-1})=-\frac{1}{1-p}\sum_{a=1}^{p-1}\log_p(1+pQ_p(a)).
\]
Since $1+pQ_p(a)\in 1+p\mathbb Z_p$, the $p$-adic logarithm is given by its usual power series. Therefore
\begin{align*}
\log_p(-(p-1)!)&=-\frac{1}{1-p}\sum_{a=1}^{p-1}\log_p(1+pQ_p(a)) \\
&=-\frac{1}{1-p}\sum_{a=1}^{p-1}\sum_{i=1}^{\infty}\frac{(-1)^{i-1}}{i}(pQ_p(a))^i \\
&=-\frac{1}{1-p}\sum_{i=1}^{\infty}\frac{(-1)^{i-1}}{i}Q(p,i)p^i.
\end{align*}

Now define
\[
G(\mathbf{X})(T)=-\frac{1}{1-T}\sum_{i=1}^{\infty}\frac{(-1)^{i-1}}{i}X_iT^i.
\]
Then $H(\mathbf{X})(T)=-\exp_T(G(\mathbf{X})(T))$ and
\[
\log_p(-(p-1)!)=G(\{Q(p,i)\}_i)(p).
\]
Since $|-(p-1)!-1|\le p^{-1}<p^{-1/(p-1)}$, we may apply the $p$-adic exponential and get
\begin{align*}
(p-1)!&=-\exp_p(\log_p(-(p-1)!))=-\exp_p(G(\{Q(p,i)\}_i)(p))=H(\{Q(p,i)\}_i)(p) \\
&=-1+\sum_{\nu=1}^{\infty}\frac{1}{\nu!}\widetilde\psi_\nu(Q(p,1),\dots,Q(p,\nu))p^\nu.
\end{align*}
Hence
\[
W_p=\sum_{\nu=1}^{\infty}\frac{1}{\nu!}\tilde{\psi}_\nu(Q(p,1),\dots,Q(p,\nu))p^{\nu-1}.
\]

Moreover,
\begin{align*}
G(\mathbf{X})(T)&=-\frac{1}{1-T}\sum_{i=1}^{\infty}\frac{(-1)^{i-1}}{i}X_iT^i \\
&=-\left(\sum_{j=0}^{\infty}T^j\right)\left(\sum_{i=1}^{\infty}\frac{(-1)^{i-1}}{i}X_iT^i\right) \\
&=-\sum_{k=1}^{\infty}\left(\sum_{i=1}^{k}\frac{(-1)^{i-1}}{i}X_i\right)T^k \\
&=\sum_{k=1}^{\infty}\frac{1}{k!}\left(\sum_{i=1}^{k}\frac{(-1)^ik!}{i}X_i\right)T^k.
\end{align*}
Since the complete Bell polynomials are characterized by
\[
\text{exp}\left(\sum_{i=1}^{\infty} \frac{1}{i!}X_iT^i\right)=1+\sum_{i=1}^{\infty} \frac{1}{i!}\mathbb{B}_i(X_1,\dots,X_i)T^i,
\]
then
\begin{align*}
H(\mathbf{X})(T)&=-\exp_T(G(\mathbf{X})(T)) \\
&=-1-\sum_{\nu=1}^{\infty}\frac{1}{\nu!}\mathbb{B}_{\nu}\left(\left\{\sum_{i=1}^{k}\frac{(-1)^ik!}{i}X_i\right\}_{k=1,\dots,\nu}\right)T^\nu.
\end{align*}
Therefore,
\[
\tilde{\psi}_\nu(\mathbf{X})=-\mathbb{B}_\nu\left(\left\{\sum_{i=1}^{k}\frac{(-1)^ik!}{i}X_i\right\}_{k=1,\dots,\nu}\right)\in \mathbb{Z}[X_1,\dots,X_\nu].
\]

We now expand $H(\mathbf{X})(T)$ without using the complete Bell polynomials. We have
\begin{align*}
&H(\mathbf{X})=-\text{exp}\left(\sum_{\ell=1}^{\infty} \frac{1}{\ell!}\left(\sum_{i=1}^{\ell} \frac{(-1)^i\ell!}{i}X_i\right)T^\ell\right)=-1-\sum_{j=1}^{\infty} \frac{1}{j!}\left(\sum_{\ell=1}^{\infty} \left(\sum_{i=1}^{\ell} \frac{(-1)^i}{i}X_i\right)T^{\ell}\right)^j \\
=&-1-\sum_{j=1}^{\infty} \frac{1}{j!}\sum_{\nu=j}^{\infty} \sum_{\substack{\ell_1,\dots,\ell_j\ge 1 \\ \ell_1+\dots+\ell_j=\nu}}\left(\prod_{s=1}^j \sum_{i=1}^{\ell_s} \frac{(-1)^i}{i}X_i\right)T^{\nu} \\
=&-1-\sum_{\nu=1}^{\infty} \left\{\sum_{j=1}^{\nu} \frac{1}{j!}\sum_{\substack{\ell_1,\dots,\ell_j\ge 1 \\ \ell_1+\dots+\ell_j=\nu}}\left(\prod_{s=1}^j \sum_{i=1}^{\ell_s} \frac{(-1)^i}{i}X_i\right)\right\}T^{\nu} \\
=&-1-\sum_{\nu=1}^{\infty} \left\{\sum_{j=1}^{\nu} \sum_{\substack{\ell_1,\dots,\ell_j\ge 1 \\ \ell_1+\dots+\ell_j=\nu}} \sum_{\substack{1\le i_1\le \ell_1 \\ \vdots \\ 1\le i_j\le \ell_j}} \frac{1}{j!}\prod_{s=1}^j \frac{(-1)^{i_s}}{i_s}X_{i_s}\right\}T^{\nu}.
\end{align*}
For fixed $i_1,\dots,i_j$, the number of tuples $(\ell_1,\dots,\ell_j)$ satisfying
\[
\ell_s\ge i_s,~~\ell_1+\dots+\ell_j=\nu
\]
is $\ds \left(\begin{array}{c} \nu-(i_1+\dots+i_j)+j-1 \\ j-1\end{array}\right)$. Hence
\begin{align*}
&H(\mathbf{X})=-1-\sum_{\nu=1}^{\infty} \left\{\sum_{j=1}^{\nu} \sum_{\substack{\ell_1,\dots,\ell_j\ge 1 \\ \ell_1+\dots+\ell_j=\nu}} \sum_{\substack{1\le i_1\le \ell_1 \\ \vdots \\ 1\le i_j\le \ell_j}} \frac{1}{j!}\prod_{s=1}^j \frac{(-1)^{i_s}}{i_s}X_{i_s}\right\}T^{\nu} \\
=&-1+\sum_{\nu=1}^{\infty} \left\{\sum_{j=1}^{\nu} \sum_{\substack{i_1,\dots,i_j\ge 1 \\ i_1+\dots+i_j\le \nu}} (-1)^{i_1+\dots+i_j+1}\left(\begin{array}{c} \nu-(i_1+\dots+i_j)+j-1 \\ j-1\end{array}\right) \frac{1}{j!}\prod_{s=1}^j \frac{1}{i_s}X_{i_s}\right\}T^{\nu}.
\end{align*}
Grouping the ordered tuples $(i_1,\dots,i_j)$ according to their multiplicities, write
\[
m_r=\#\{s~|~i_s=r\}~~(1\le r\le \nu).
\]
Then
\[
j=m_1+\dots+m_{\nu},~~i_1+\dots+i_j=m_1+2m_2+\dots+\nu m_{\nu}.
\]
The number of ordered tuples with fixed multiplicities $(m_1,\dots,m_{\nu})$ is $\ds \frac{(m_1+\dots+m_{\nu})!}{m_1!\dots m_{\nu}!}$. Thus
\begin{align*}
&H(\mathbf{X}) \\
=&-1+\sum_{\nu=1}^{\infty} \left\{\sum_{j=1}^{\nu} \sum_{\substack{i_1,\dots,i_j\ge 1 \\ i_1+\dots+i_j\le \nu}} (-1)^{i_1+\dots+i_j+1}\left(\begin{array}{c} \nu-(i_1+\dots+i_j)+j-1 \\ j-1\end{array}\right) \frac{1}{j!}\prod_{s=1}^j \frac{1}{i_s}X_{i_s}\right\}T^{\nu} \\
=&-1+\sum_{\nu=1}^{\infty} \Bigg\{\sum_{\substack{m_1,\dots,m_{\nu}\ge 0 \\ 1\le m_1+2m_2+\dots+\nu m_{\nu}\le \nu}} \frac{(m_1+\dots+m_{\nu})!}{m_1!\dots m_{\nu}!} \\
&~~~~~~~~~~~~~~~(-1)^{m_1+2m_2+\dots+\nu m_{\nu}+1}\left(\begin{array}{c} \nu-(m_1+2m_2+\dots+\nu m_{\nu})+(m_1+\dots+m_{\nu})-1 \\ (m_1+\dots+m_{\nu})-1\end{array}\right) \\
&~~~~~~~~~~~~~~~\frac{1}{(m_1+\dots+m_{\nu})!}\prod_{r=1}^{\nu} \frac{1}{r^{m_r}}X_r^{m_r}\Bigg\}T^{\nu} \\
=&-1+\sum_{\nu=1}^{\infty} \frac{1}{\nu!}\Bigg\{\sum_{\substack{m_1,\dots,m_{\nu}\ge 0 \\ 1\le m_1+2m_2+\dots+\nu m_{\nu}\le \nu}}(-1)^{m_1+2m_2+\dots+\nu m_{\nu}+1} \\
&~~~~~~~~~~~~~~~~~~\nu!\left(\begin{array}{c} \nu-(m_1+2m_2+\dots+\nu m_{\nu})+(m_1+\dots+m_{\nu})-1 \\ (m_1+\dots+m_{\nu})-1\end{array}\right) \\
&~~~~~~~~~~~~~~~~~~\left(\prod_{r=1}^{\nu} \frac{1}{r^{m_r}m_r!}\right)\left(\prod_{r=1}^{\nu}X_r^{m_r}\right)\Bigg\}T^{\nu}.
\end{align*}
Comparing coefficients with $\ds H(\mathbf{X})(T)=-1+\sum_{\nu=1}^{\infty} \frac{1}{\nu!}\tilde{\psi}_{\nu}(\mathbf{X})T^{\nu}$, we get
\begin{align*}
\tilde{\psi}_{\nu}(\mathbf{X})&=\sum_{\substack{m_1,\dots,m_{\nu}\ge 0 \\ 1\le m_1+2m_2+\dots+\nu m_{\nu}\le \nu}}(-1)^{m_1+2m_2+\dots+\nu m_{\nu}+1} \\
&~~~~~~~~~~~~\nu!\left(\begin{array}{c} \nu-(m_1+2m_2+\dots+\nu m_{\nu})+(m_1+\dots+m_{\nu})-1 \\ (m_1+\dots+m_{\nu})-1\end{array}\right) \\
&~~~~~~~~~~~~~~~~~~~~~~~~~~~~~~~~~~~~~~~~~~~~~~~~~~~~~~~~~~~~\left(\prod_{r=1}^{\nu} \frac{1}{r^{m_r}m_r!}\right)\left(\prod_{r=1}^{\nu}X_r^{m_r}\right).
\end{align*}
\\
\end{proof}

\section{A resolution of Kellner's conjectures}

\begin{cor} \n
Both of Kellner's conjectures are true.
\\
\end{cor}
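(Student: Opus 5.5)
Both parts rest on the identification $\psi_\nu=\tilde{\psi}_\nu$ (the preceding Corollary), so we may work throughout with the generating function $H(\mathbf{X})(T)$ and the explicit coefficient formula in Theorem 1.3. We treat (ii) first because it is immediate, and then the counting assertion (i).

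For (ii), substitute $X_i=\pm1$ for all $i$. In the Bell-polynomial form, the $k$-th argument becomes
\[
\sum_{i=1}^k\frac{(-1)^i k!}{i}(\pm1)=\mp k!\sum_{i=1}^k\frac{(-1)^{i+1}}{i}=\mp k!\,\overline{H}_k .
\]
This gives $\psi_\nu(\{\pm1\})=-\mathbb{B}_\nu(\{\mp i!\overline{H}_i\})$ directly. For the generating function, put $X_i=\pm1$ in the exponent of $H$:
\[
\sum_{i\ge1}\frac{(-1)^i}{i}(\pm1)\frac{T^i}{1-T}=\mp\frac{\log(1+T)}{1-T}.
\]
Hence $H=-(1+T)^{\mp\frac{1}{1-T}}$. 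Since $\sum_{\nu\ge1}\frac{1}{\nu!}\psi_\nu(\{\pm1\})T^\nu=H+1$, the stated identity follows.

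For (i), the monomials that can occur in $\psi_\nu$ are exactly the $\prod_r X_r^{m_r}$ with $1\le\sum r m_r\le\nu$. These correspond bijectively to partitions of the integers $k=1,\dots,\nu$, so there are $\mathcal{P}_\Sigma(\nu)$ of them. Moreover, distinct tuples $(m_r)$ give distinct monomials, so no two summands in the explicit formula of Theorem 1.3 combine. It therefore remains to check that each coefficient is nonzero. That coefficient is
\[
\pm\,\nu!\binom{\nu-k+j-1}{j-1}\prod_r\frac{1}{r^{m_r}m_r!},\qquad j=\sum m_r\ge1,\quad k=\sum r m_r\le\nu .
\]
Here $\nu-k+j-1\ge j-1\ge0$, so the binomial coefficient is a positive integer, and the whole coefficient is nonzero. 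Thus $\psi_\nu$ has exactly $\mathcal{P}_\Sigma(\nu)$ terms.

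The main point needing care is this non-cancellation argument in (i): we must use the explicit monomial expansion rather than the Bell-polynomial form, where cancellation is not visibly excluded. One must also confirm that "terms" in Kellner's conjecture means distinct monomials with nonzero coefficient. Beyond that, everything reduces to substitutions into Theorem 1.3.
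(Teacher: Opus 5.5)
Your proposal is correct and follows the paper's proof. You use $\psi_\nu=\tilde{\psi}_\nu$, read off (i) from the explicit monomial expansion of Theorem 1.3 via the bijection with partitions of $1,\dots,\nu$, and get (ii) by setting $X_i=\pm1$ in both the Bell-polynomial form and the generating function $H(\mathbf{X})(T)$; your explicit check that every coefficient is nonzero (because $\binom{\nu-k+j-1}{j-1}\ge1$) usefully fills in a step the paper leaves tacit.
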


\begin{proof} \n
By Theorem 1.3 and Corollary 1.7,
\begin{align*}
\psi_{\nu}(\mathbf{X})&=\sum_{\substack{m_1,\dots,m_{\nu}\ge 0 \\ 1\le m_1+2m_2+\dots+\nu m_{\nu}\le \nu}}(-1)^{m_1+2m_2+\dots+\nu m_{\nu}+1} \\
&~~~~~~~~~~~~\nu!\left(\begin{array}{c} \nu-(m_1+2m_2+\dots+\nu m_{\nu})+(m_1+\dots+m_{\nu})-1 \\ (m_1+\dots+m_{\nu})-1\end{array}\right) \\
&~~~~~~~~~~~~~~~~~~~~~~~~~~~~~~~~~~~~~~~~~~~~~~~~~~~~~~~~~~~~\left(\prod_{r=1}^{\nu} \frac{1}{r^{m_r}m_r!}\right)\left(\prod_{r=1}^{\nu}X_r^{m_r}\right).
\end{align*}
Therefore, the monomials appearing in $\psi_{\nu}$ are in bijection with the non-empty partitions of integers $m$ with $1\le m\le \nu$. Hence the number of terms of $\psi_{\nu}$ is $\mathcal{P}_{\Sigma}(\nu)$, so that Conjecture 1.2 (i) is true.

We have
\begin{align*}
\sum_{\nu=1}^{\infty}\frac{1}{\nu!}\psi_\nu(\{\pm1\})T^\nu&=H(\{\pm 1\})+1=1-\text{exp}\left(\pm \sum_{i=1}^{\infty} \frac{(-1)^i}{i}\frac{T^i}{1-T}\right) \\
&=1-\text{exp}\left(\left(\mp \frac{1}{1-T}\right)\log(1+T)\right)=1-(1+T)^{\mp \frac{1}{1-T}}.
\end{align*}

Since
\[
\psi_{\nu}(\mathbf{X})=-\mathbb{B}_\nu\left(\left\{\sum_{i=1}^{k}\frac{(-1)^ik!}{i}X_i\right\}_{k=1,\dots,\nu}\right),
\]
by Theorem 1.3, after specializing all variables $X_i$ to $+1$ or all variables $X_i$ to $-1$, respectively, we obtain
\[
\psi_\nu(\{\pm1\})=-\mathbb{B}_\nu(\{\mp k!\overline{H}_k\}_{k=1,\dots,\nu}).
\]
Hence, Conjecture 1.2 (ii) is also true.
\\
\end{proof}


\begin{thebibliography}{9}

\bibitem{Kellner}
B.~C.~Kellner,
\textit{Wilson's theorem modulo higher prime powers I: Fermat and Wilson quotients},
arXiv:2509.05235 [math.NT], 2025.
\newblock DOI: \texttt{10.48550/arXiv.2509.05235}.

\bibitem{Lerch}
M.~Lerch,
\textit{Zur Theorie des Fermatschen Quotienten \(\frac{a^{p-1}-1}{p}=q(a)\)},
Math. Ann. \textbf{60} (1905), 471--490.
\newblock DOI: \texttt{10.1007/BF01561092}.

\end{thebibliography}
\end{document}